\numberwithin{equation}{section}
\newtheorem{thm}{Theorem}[section]
\newtheorem{lem}[thm]{Lemma}
\newtheorem{cor}[thm]{Corollary}
\newtheorem{prop}[thm]{Proposition}
\theoremstyle{remark}
\newtheorem{rem}[thm]{Remark}
\newtheorem{ex}[thm]{Example}
\newcommand{\tref}[1]{Theorem~\ref{#1}}
\newcommand{\cref}[1]{Korollar~\ref{#1}}
\newcommand{\diam}{\mathrm{diam}}
\newcommand{\R}{\mathbb{R}}
\newcommand{\C}{\mathbb{C}}
\newcommand{\area}{\mathrm{area}}
\begin{document}
\pagebreak


\title{On conformal planes of finite area}

\thanks{
	A. L. was partially supported by the DFG grants   SFB TRR 191 and SPP 2026.}

\author{Alexander Lytchak}

\address
{Institute for Algebra and Geometry\\KIT\\ Englerstr. 2 \\ 76131 Karlsruhe, Germany}
\email{alexander.lytchak@kit.edu}

\keywords
{Alexandrov surface, curvature bounds, uniformization} 



\begin{abstract}
	We discuss solutions of several  questions concerning the geometry of conformal planes.
\end{abstract}



\maketitle

\section{Introduction}
\subsection{Applications} Recently, the Liouville equation
\begin{equation}   \label{eq: Liouv}
\Delta u + e ^{2u} =0 \,,
\end{equation}
and its (super-) solutions   on $\R^2$  were investigated in a series of work \cite{Liouv}, \cite{Liouvrig}, \cite{Erem}, see also \cite{ChenLi}, \cite{ChouWan}. Interesting facts on the geometry of the  corresponding conformal planes 
$$X^u= (\R^2, e^{2u} \cdot \delta _{Eucl}) $$
were proven  and the authors formulated  several related questions. 

Solutions  of  \eqref{eq: Liouv} correspond  to conformal planes of constant curvature $1$ and   are  closely related to  some meromorphic functions on $\C$. Complex analysis can been successfully used to study the solutions and arising geometries \cite{Liouvrig}, \cite{Erem}.  For supersolutions of \eqref{eq: Liouv}, thus for conformal metrics on the plane of curvature $\geq 1$,  complex analysis does not seem to be such an appropriate tool.

The theory of surfaces with integral curvature bounds in the sense of Alexandrov,  see \cite{AZ}, \cite{Reshetnyak-GeomIV},  \cite{Troyanov}  turns out to be more  helpful, especially, for questions concerning  conformal planes of bounded total area and curvature. This  approach implies   the following solutions to four  questions formulated in \cite{Liouv} and \cite{Smooth}.   

\begin{prop} \label{prop: 1}
For  a smooth $u:\R^2 \to \R$ satisfying 
\begin{equation}   \label{eq: Liouv+}
\Delta u + e ^{2u} \leq 0 \,,
\end{equation}
 let the conformal plane $X^u$ have finite area. Then the diameter $\diam (X^u)$ of the plane $X^u$ can be any number in the interval  $(0,2\pi)$. 
\end{prop}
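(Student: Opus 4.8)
The plan is to translate the analytic hypothesis into a curvature bound and then argue entirely within the cited theory of Alexandrov surfaces. The inequality $\Delta u + e^{2u}\le 0$ says exactly that the Gauss curvature of the smooth conformal metric $g=e^{2u}\delta_{Eucl}$ is $\ge 1$; thus $X^u$ is a smooth, noncomplete surface of curvature $\ge 1$, homeomorphic to $\R^2$, and of finite area. Since finite area forces the end at infinity to lie at finite distance, the first step is to pass to the metric completion $\hat X$. Using the theory of surfaces of bounded integral curvature (\cite{AZ}, \cite{Reshetnyak-GeomIV}, \cite{Troyanov}), $\hat X$ is a compact surface of bounded integral curvature whose interior $X^u$ carries curvature $\ge 1$; the completion set $\partial=\hat X\setminus X^u$ is either a single point $p_\infty$ (so $\hat X\cong S^2$) or a Jordan curve (so $\hat X$ is a disk). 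I would first record this dichotomy and note that $\diam(X^u)=\diam(\hat X)$ by density.

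For the realization part I would exhibit explicit families. Small diameters are produced by round spheres of large constant curvature $k>1$ (conformally the punctured plane), whose diameter $\pi/\sqrt k$ sweeps out $(0,\pi)$; here $\hat X$ is a genuine curvature-$\ge 1$ sphere. Values in $[\pi,2\pi)$ I would obtain from thin \emph{tongues}: take a geodesic arc of length $\ell\in(\pi,2\pi)$ on the unit sphere and a thin neighborhood of it, an open topological disk that is conformally $\R^2$. Such a tongue has curvature $1$, finite area, and—because the only way to join its two tips inside the thin set is to run along the arc—its diameter is close to $\ell$; letting $\ell\to 2\pi$ and the width tend to $0$ pushes the diameter to $2\pi$. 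A continuity/intermediate-value argument along these families then produces every value in $(0,2\pi)$.

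The heart of the matter is the upper bound $\diam(X^u)<2\pi$. The key quantity is the radius to infinity: I claim $d(x,\partial)\le\pi$ for every $x\in X^u$. Indeed, a shortest path $\sigma$ from $x$ to $\partial$ is a smooth geodesic whose interior lies in $X^u$; if its length exceeded $\pi$, Sturm comparison against the bound $K\ge 1$ produces an interior conjugate point, and the associated second-variation field—of arbitrarily small amplitude, vanishing at the fixed endpoint on $\partial$—stays inside $X^u$ and shortens $\sigma$, a contradiction. When $\partial=\{p_\infty\}$ this gives $\diam\le d(p,p_\infty)+d(p_\infty,q)\le 2\pi$ directly, and strictness follows because the equality $d(x,p_\infty)=\pi$ together with the rigidity in the comparison would force curvature identically $1$ along a full family of rays, incompatible with the smooth open end and finite area.

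The main obstacle, and the part I expect to require real work, is the disk case, where $\partial$ is a \emph{concave} Jordan curve: then $\hat X$ is \emph{not} an Alexandrov space of curvature $\ge 1$ (a minimizing path between interior points may refuse to be an interior geodesic and instead hug $\partial$), so neither Bonnet--Myers nor the doubling theorem can be applied to $\hat X$. To handle it I would either (i) locate a single interior \emph{center} $o$ with $\rad_o=\max_x d(o,x)\le\pi$, whence $\diam\le 2\rad_o\le 2\pi$, combining the distance-to-$\partial$ bound with a minimax over $X^u$; or (ii) analyze a diameter-realizing shortest path in $\hat X$, decomposing it into an entering segment, an arc along $\partial$, and an exiting segment, bounding the two segments by $\pi$ as above and controlling the boundary arc through Gauss--Bonnet, using $\int_{\hat X}K+\int_{\partial}k_g=2\pi$ and $K\ge 1$ to limit how far a minimizer can travel along the concave boundary. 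Establishing this last bound cleanly—showing the boundary contribution cannot conspire with the two $\pi$-segments to reach $2\pi$—is the crux, and strictness of the inequality should once more come from the rigidity case of the comparison.
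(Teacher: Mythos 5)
There is a genuine gap, and it sits exactly at the substantive part of the statement. Your ``thin tongue'' --- a thin tubular neighborhood of a geodesic arc of length $\ell\in(\pi,2\pi)$ in the unit sphere --- is a simply connected open subset of $\mathbb S^2$ whose complement contains an open set, so by the Riemann mapping theorem it is conformally equivalent to the \emph{disc}, not to $\R^2$. Hence it is not of the form $X^u$ for any $u:\R^2\to\R$, and your construction produces no admissible example with diameter in $[\pi,2\pi)$. Since the whole point of the proposition is to exhibit diameters exceeding $\pi$ (answering \cite[Question 8.2]{Liouv} negatively; the range $(0,\pi]$ is already covered by round spheres of curvature $k\ge 1$ minus a point, or by the rescaling remark), this is fatal. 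Verifying the conformal type is precisely the delicate issue the paper isolates in Proposition \ref{prop: unif}: one needs the completion to be a sphere with $\hat X\setminus X$ a \emph{single point} at which the area growth is at most quadratic. The paper therefore builds a piecewise spherical sphere (two isometric spherical triangles glued and then doubled) with exactly one cone point $p$ of angle $>2\pi$ and diameter $\ge 2\pi-4\varepsilon$, smooths the remaining cone points of angle $<2\pi$ keeping curvature $\ge 1$ (\cite[Lemma 2.4]{Itoh}), and removes only $p$; Proposition \ref{prop: unif} then certifies that the result is a conformal plane. Your tongue, by contrast, has a completion whose singular set is a whole Jordan curve, so no such criterion can apply.

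Two further remarks. First, the last two paragraphs of your proposal prove (or attempt to prove) the upper bound $\diam(X^u)<2\pi$, which is not what the proposition asserts: the bound $\diam\le 2\pi$ is the already-known \cite[Theorem 1.4]{Liouv}, and the paper explicitly leaves open whether the value $2\pi$ itself is attained. Second, the dichotomy you posit for $\hat X\setminus X^u$ (one point or a Jordan curve) is incompatible with Lemma \ref{prop: incompl}: for a conformal \emph{plane} of finite area the completion adds at most one point, so your ``disk case'' --- which you identify as the crux --- never occurs.
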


In \cite[Theorem 1.4]{Liouv}, it was proved that  \eqref{eq: Liouv+} implies $\diam (X^u) \leq 2\pi$, and 
\cite[Question 8.2]{Liouv} asks if the inequality $\diam (X^u) \leq \pi$ holds.

\begin{prop} \label{prop: area}
For  a smooth $u:\R^2 \to \R$ satisfying  \eqref{eq: Liouv+},
the area of the conformal plane $X^u$ can be infinite or any positive real number. 
\end{prop}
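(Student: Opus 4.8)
The plan is to exhibit a single explicit one-parameter family of functions that simultaneously realizes every admissible value of the area, rather than to argue abstractly. The starting point is that \eqref{eq: Liouv+} is precisely the condition that the Gauss curvature $K=-e^{-2u}\Delta u$ of $X^u$ satisfy $K\ge 1$, with equality in \eqref{eq: Liouv+} corresponding to $K\equiv 1$. A distinguished solution of the Liouville equation \eqref{eq: Liouv} on $\R^2$ is
\[
u_0(x,y)=-\log\cosh x,\qquad e^{2u_0}=\frac{1}{\cosh^2 x};
\]
geometrically $X^{u_0}$ is the pullback of the round unit sphere under $z\mapsto e^z$, i.e.\ the Riemannian universal cover of the sphere with its two poles removed. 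It has $K\equiv 1$ and, since $e^{2u_0}$ is independent of $y$, infinite area. This already settles the infinite-area case.

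First I would damp this model in the $y$-direction while retaining the curvature bound. Set, for $\lambda\ge 0$,
\[
u_\lambda(x,y)=-\log\cosh x-\lambda y^2 .
\]
Each $u_\lambda$ is smooth on $\R^2$, and a direct computation gives $\Delta u_\lambda=-\tfrac{1}{\cosh^2 x}-2\lambda$ together with $e^{2u_\lambda}=\tfrac{1}{\cosh^2 x}\,e^{-2\lambda y^2}$, so that
\[
\Delta u_\lambda+e^{2u_\lambda}=-2\lambda-\frac{1}{\cosh^2 x}\bigl(1-e^{-2\lambda y^2}\bigr)\le 0 .
\]
Hence every $u_\lambda$ satisfies \eqref{eq: Liouv+}; conceptually, adding the concave term $-\lambda y^2$ only makes $\Delta u$ more negative, so it can only raise the curvature, and the bound $K\ge 1$ is preserved (and is strict for $\lambda>0$).

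Then I would read off the area by separating variables:
\[
\area(X^{u_\lambda})=\int_{\R^2}e^{2u_\lambda}\,dx\,dy=\Bigl(\int_{\R}\frac{dx}{\cosh^2 x}\Bigr)\Bigl(\int_{\R}e^{-2\lambda y^2}\,dy\Bigr).
\]
The first factor equals $2$; the second is infinite for $\lambda=0$ and equals $\sqrt{\pi/(2\lambda)}$ for $\lambda>0$. Thus $\area(X^{u_\lambda})=\sqrt{2\pi/\lambda}$ when $\lambda>0$, which runs through every value of $(0,\infty)$ as $\lambda$ ranges over $(0,\infty)$, while $\lambda=0$ yields infinite area. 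Choosing $\lambda=2\pi/A^2$ realizes any prescribed finite area $A$.

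There is essentially no serious obstacle here, and the only point worth isolating is conceptual. A complete surface of curvature $\ge 1$ is compact with area at most $4\pi$, so finite areas above $4\pi$, and infinite area, can occur only for incomplete metrics. The plane $X^{u_\lambda}$ is indeed incomplete, as one reaches $x=\pm\infty$ along the $x$-axis in finite length $\int_{\R}\tfrac{dx}{\cosh x}=\pi$, exactly as for the punctured round sphere; this is precisely why no Cohn--Vossen- or Bonnet--Myers-type bound constrains the area from above. Everything else reduces to the elementary Gaussian integral above.
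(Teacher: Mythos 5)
Your proof is correct, and it takes a genuinely different and more elementary route than the paper. The paper constructs a singular piecewise spherical sphere (the doubled spherical join of a point with a long interval), smooths the cone points of angle less than $2\pi$ using \cite[Lemma 2.4]{Itoh}, removes the remaining singularity, and then must invoke Proposition \ref{prop: unif} -- i.e.\ the metric uniformization machinery of \cite{NR} -- to certify that the resulting incomplete surface is conformally $\R^2$ rather than the disc; finally it rescales to sweep out all finite areas. Your family $u_\lambda(x,y)=-\log\cosh x-\lambda y^2$ bypasses every one of these steps: the functions are given in closed form directly on $\R^2$, so there is nothing to uniformize; the differential inequality \eqref{eq: Liouv+} is a one-line computation (correct as written, since $-2\lambda\le 0$ and $e^{-2\lambda y^2}\le 1$); and the area separates into $2\cdot\sqrt{\pi/(2\lambda)}=\sqrt{2\pi/\lambda}$, which realizes every value in $(0,\infty)$ and degenerates to $\infty$ at $\lambda=0$, where $u_0$ is an exact Liouville solution (the pullback of the round sphere under $z\mapsto e^z$). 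The paper's synthetic approach buys more: the same construction scheme is reused for Proposition \ref{prop: 1}, where the diameter must be pushed toward $2\pi$, something your family does not achieve (its diameter stays bounded well below $2\pi$); and it illustrates the general sphere-to-plane mechanism that organizes the whole note. Your approach buys complete explicitness and independence from the uniformization results, which for this particular proposition is arguably preferable. Your closing remarks on incompleteness and the failure of Bonnet--Myers/Cohn--Vossen bounds are accurate and consistent with Theorem \ref{thm: main}.
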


On contrary, for solutions $u$ of \eqref{eq: Liouv} 
 the conformal planes $X^u$ have area   $4\pi$ or infinity, \cite{Liouv}.
 It has been asked in \cite[Question 8.3]{Liouv}, whether the upper bound of $4\pi$ is valid for all  conformal planes $X^u$ 
of finite area  corresponding to solutions of  \eqref{eq: Liouv+}. The above result has been  independently  observed by Alexandre Eremenko.

As a  consequence, we deduce a negative answer to another question formulated in \cite[Question 8.7]{Liouv},  see Corollary \ref{prop: 3} below.

\subsection{From the  sphere to conformal planes}
The above results    are easy consequences of known theorems on singular  metrics on $\mathbb S^2$ with bounded integral curvature and of a simple 
relation between   conformal planes and  conformal spheres, which we are going to explain now.

By the uniformization theorem, any Riemannian metric on $\R^2$  is either conformally equivalent to the disc or to the plane. While it is easy to construct many (non-complete) Riemannian metrics on $\R^2$ with prescribed curvature properties, (for instance, with constant curvature 1),  it seems  difficult to verify that such a synthetically constructed metric is a conformal plane.  A criterion of conformality is provided by the special case of a classical result of Cheng--Yau \cite[Corollary 1]{CY}: If a \emph{complete} Riemannian manifold   $X$ homeomorphic to $\R^2$ has  at most \emph{quadratic area growth}
then $X$ is a conformal plane. In particular,  all \emph{complete} Riemannian metrics of \emph{finite area} on $\R^2$ are conformal planes.

 An easy criterion for \emph{non-complete} planes, sufficient for the Propositions stated above, is the following one. 

\begin{prop} \label{prop: unif}
Let $X$ be a Riemannian manifold homeomorphic to the plane and of finite area.
Assume that the completion
$\hat X$ of $X$ is homeomorphic to $\mathbb S^2$ and that $\hat X \setminus X$ has just one point $p$.  If the area of metric balls $B_r(p)$ in $\hat X$ around  $p$ grows at most quadratically, $$\liminf _{r\to 0} \frac {\area (B_{r}(p))} {r^2}  < \infty \;,$$
then $X$ is conformally equivalent to the plane.
\end{prop}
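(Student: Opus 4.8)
The plan is to show that the single boundary point $p$ is a \emph{removable puncture} for the conformal structure of $X$: once the conformal structure extends across $p$, the completion $\hat X$ becomes a closed Riemann surface which, being homeomorphic to $\mathbb S^2$, is conformally the Riemann sphere $\hat{\C}$; then $X=\hat X\setminus\{p\}$ is conformally $\hat{\C}$ minus a point, i.e. the plane $\C$ (in particular the disc alternative of the uniformization dichotomy is excluded). By the classification of doubly connected Riemann surfaces, the puncture is removable precisely when the annular end of $X$ at $p$ has \emph{infinite conformal modulus}. Thus everything reduces to proving that the nested ring domains $A(\rho,R)=B_R(p)\setminus\overline{B_\rho(p)}\subset X$ (for a small fixed $R$) have conformal modulus tending to $\infty$ as $\rho\to0$.

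To estimate the modulus I would use the Riemannian metric of $X$ \emph{itself} as the admissible test metric in the length--area inequality, so that the quadratic area bound enters directly. Writing $V(r)=\area(B_r(p))$, the hypothesis $\liminf_{r\to0}V(r)/r^2<\infty$ furnishes a constant $C$ and a sequence $r_k\to0$ with $V(r_k)\le C r_k^2$; passing to a subsequence I may assume $r_{k+1}\le r_k/2$. Any curve joining the two boundary circles of $A(r_{k+1},r_k)$ reaches the levels $\{d(\cdot,p)=r_{k+1}\}$ and $\{d(\cdot,p)=r_k\}$, and since $d(\cdot,p)$ is $1$-Lipschitz its length is at least $r_k-r_{k+1}\ge r_k/2$. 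The length--area inequality then gives
\[
\mathrm{mod}\,A(r_{k+1},r_k)\ \ge\ \frac{(r_k-r_{k+1})^2}{\area A(r_{k+1},r_k)}\ \ge\ \frac{(r_k/2)^2}{V(r_k)}\ \ge\ \frac{1}{4C}.
\]

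Next I would stack these annuli. The ring domains $A(r_{k+1},r_k)$ are disjoint and each separates the two boundary components of $A(r_N,r_0)$, so by the superadditivity (series law, Gr\"otzsch inequality) of the modulus,
\[
\mathrm{mod}\,A(r_N,r_0)\ \ge\ \sum_{k=0}^{N-1}\mathrm{mod}\,A(r_{k+1},r_k)\ \ge\ \frac{N}{4C},
\]
which tends to $\infty$ as $N\to\infty$. Hence the annular end of $X$ at $p$ has infinite modulus, the region $B_{r_0}(p)\setminus\{p\}$ is conformally a punctured disc, and the conformal structure extends over $p$, finishing the argument as described above.

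The genuinely delicate step is the last one: converting the analytic statement \emph{infinite modulus} into the conformal statement \emph{removable puncture}, i.e. producing a holomorphic coordinate at $p$ on $\hat X$. This rests on the classification of doubly connected domains (among them only the punctured disc has an end of infinite modulus adjacent to a genuine boundary circle), together with the point--set remark that for small $r_0$ the sets $B_{r_0}(p)\setminus\{p\}$ are honest annular neighborhoods of the end, which uses that $\hat X$ is a topological surface near $p$. By contrast the modulus estimate itself is elementary once one makes the key choice of the background metric as test metric; note also that only a $\liminf$ sequence of radii is needed, the superadditivity absorbing the absence of a uniform quadratic bound.
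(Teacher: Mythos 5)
Your argument is correct in substance, but it takes a genuinely different route from the paper. The paper does not touch extremal length at all: it invokes the metric-space uniformization theorem of Ntalampekos--Rajala to produce a weakly quasiconformal map $h\co \mathbb S^2\to\hat X$ (using only that $\hat X$ has finite Hausdorff $2$-measure), uses the quadratic area growth at $p$ to upgrade $h$ to a homeomorphism, and then excludes the disc alternative because no weakly quasiconformal homeomorphism $\R^2\to D$ exists. Your proof is the classical alternative: the quadratic area bound feeds a length--area estimate showing the end at $p$ has infinite modulus, hence is a removable puncture, after which $\hat X$ is the Riemann sphere and $X\cong\C$. What your approach buys is elementarity and self-containedness (only the classification of doubly connected Riemann surfaces and the Gr\"otzsch serial rule); what the paper's approach buys is that the same machine applies with no smoothness at all and fits the broader metric-geometric framework the paper wants to advertise. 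Your version also shows the finite-area hypothesis on all of $X$ is not needed for this proposition, only the local area bound at $p$.

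Two points in your write-up need tightening, though neither is fatal. First, the metric annuli $B_{r_k}(p)\setminus\overline{B_{r_{k+1}}(p)}$ need not be connected, let alone doubly connected, so they are not ``ring domains'' and the series law for ring moduli does not literally apply; you should instead work with the curve families $\Gamma_k$ of subarcs crossing the shells $\{r_{k+1}\le d(\cdot,p)\le r_k\}$ and use the serial rule for extremal length, $\lambda(\Gamma)\ge\sum_k\lambda(\Gamma_k)$, which needs only that the shells are disjoint -- your numerical estimate survives verbatim. Second, $B_{r_0}(p)\setminus\{p\}$ need not be an annulus ($p$ is merely a completion point, so small balls around it need not be simply connected); replace it by $\mathrm{int}(D')\setminus\{p\}$ for a closed topological disc neighborhood $D'\subset B_{r_0}(p)$ of $p$ with $\partial D'\subset X$, which exists because $\hat X\cong\mathbb S^2$. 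With these substitutions the removability step goes through as you describe: the end at $p$ has infinite modulus while the $\partial D'$ end does not, so the annulus is conformally a punctured disc with the puncture at $p$, and the coordinate extends over $p$ because the balls $B_r(p)$ form a neighborhood basis of $p$ in $\hat X$.
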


It might be possible to deduce Proposition \ref{prop: unif} from the theorem by Cheng--Yau mentioned above, applying a conformal change of the metric, which resembles  the \emph{inversion} at the point $p$. Instead, we observe that Proposition \ref{prop: unif} is a consequence of a very general uniformization theorem in metric geometry \cite{BK}, \cite{Raj}, \cite{LW}, \cite{NR}.

\begin{rem}
Some assumption in Proposition \ref{prop: unif} on a neighborhood of $p$ in $\hat X$ is needed, as the following easy example demonstrates: Consider the unit Euclidean disc with the conformal factor 
$f(z)=(1-|z|^2)$.  The completion $\hat X$ of this conformal disc $X$ has finite area,  is  homeomorphic to $\mathbb S^2$, and $\hat X\setminus X$ has just one point. 
\end{rem}

Thus, in order to construct conformal planes with prescribed properties as in Propositions \ref{prop: 1}, \ref{prop: area}, it suffices to construct metrics on the sphere with one singularity $p$  and prescribed  geometric properties outside the singularity. We construct such a piecewise spherical metric with only 3 vertices, such that the total angle at just one of these vertices (the singularity $p$) is larger than $2\pi$. Note that all such metics are classified \cite{Erem1}, \cite{Panov}.  Smoothing the metric at the  singularites with angles smaller than $2\pi$,  we obtain the desired examples.  These examples have bounded integral curvature in the sense of Alexandrov, \cite{AZ},  \cite{Reshetnyak-GeomIV}, \cite{Troyanov};  more classical uniformization theorems, \cite{Troyanov}, imply  the conclusion of Proposition \ref{prop: unif} in this case.

\subsection{Completions of conformal planes}
A  partial converse to  Proposition \ref{prop: unif} is
 essentially contained 
in the proof of \cite[Theorem 1.4]{Liouv}:

\begin{lem} \label{prop: incompl}
Let  the  conformal plane  $X=X^u$ have finite area and let $\hat X$ denote the completion of $X$. Then $\hat X\setminus X$ has at most one point. 
%
\end{lem}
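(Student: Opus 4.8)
The plan is to reduce everything to a local statement at a single puncture and then to use that, arbitrarily close to that puncture, there are circles of arbitrarily small conformal length which can serve as ``gates.'' First note that since $u$ is smooth, the metric $e^{2u}\delta_{Eucl}$ is bi-Lipschitz to the Euclidean one on every compact subset of $\R^2$; hence no Cauchy sequence confined to a bounded Euclidean region can converge to a new point, and every point of $\hat X\setminus X$ is a limit of a sequence $z_k\to\infty$. Applying the inversion $w=1/z$, which sends $\{|z|>1\}$ to the punctured disc $D^*=\{0<|w|<1\}$ and turns $e^{2u}|dz|^2$ into a conformal metric $e^{2v}|dw|^2$ of the same (finite) total area, the whole question becomes local: I must show that the completion of a finite-area conformal metric on $D^*$ adds at most one point over the puncture $w=0$.

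Next I would record the area--length estimate that produces the gates. Writing $L(s)$ for the conformal length of $\{|w|=s\}$ and $a(s):=\int_0^{2\pi}e^{2v(se^{i\theta})}s\,d\theta$, Cauchy--Schwarz gives
\[
L(s)^2=\Bigl(\int_0^{2\pi} e^{v}\,s\,d\theta\Bigr)^2\le 2\pi s\,a(s),
\]
while $\area(X)=\int_0^1 a(s)\,ds<\infty$. Hence $\int_0^1 L(s)^2\,\tfrac{ds}{s}\le 2\pi\,\area(X)<\infty$, and since $\int_0 \tfrac{ds}{s}=\infty$ this forces $\liminf_{s\to 0}L(s)=0$. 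Thus there is a sequence $s_n\to 0$ with $\ell_n:=L(s_n)\to 0$; each gate circle $C_n:=\{|w|=s_n\}$ is connected of conformal length $\ell_n$, so as a subset of $\hat X$ it has diameter at most $\ell_n/2\to 0$, and $C_n$ separates the shrinking punctured disc $\{0<|w|<s_n\}$ from the boundary.

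Now suppose for contradiction that $\hat X\setminus X$ contains two points $p_1\neq p_2$, and set $D:=d(p_1,p_2)>0$. Choose $x_k\to p_1$, $y_k\to p_2$ with $x_k,y_k\to 0$, and near-minimizing connectors $\gamma_k$ from $x_k$ to $y_k$ of length tending to $D$. Since both endpoints have modulus tending to $0$, the minimal modulus attained along $\gamma_k$ also tends to $0$: the connector is forced to approach the puncture. I would then splice: for fixed $n$ and $k$ large the endpoints lie inside $\{0<|w|<s_n\}$, so the initial portions of the two approach paths from a fixed basepoint $o$ cross $C_n$, and joining their last crossing points along the short arc of $C_n$ yields
\[
d(x_k,y_k)\le \tau^{(1)}_{n,k}+\tfrac{\ell_n}{2}+\tau^{(2)}_{n,k},
\]
where $\tau^{(i)}_{n,k}$ is the length of the ``tail'' trapped inside the gate. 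Letting $k\to\infty$ and then $n\to\infty$ would give $D\le 0$, the desired contradiction.

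The main obstacle is precisely the control of the tail lengths $\tau^{(i)}_{n,k}$: because the conformal factor may be very small near the puncture, the radial distance from a point to an inner gate circle need not be small, and indeed for a ``funnel'' of finite area the puncture sits at infinite distance and contributes no ideal point at all. The resolution must therefore combine near-minimality with the finiteness of $d(o,p_i)$ to distinguish the genuine (cusp-like, finite-distance) case from the funnel case. Concretely, the part of a near-minimizing path up to its last crossing of $C_n$ has length at least $d(o,C_n)$, so $\tau^{(i)}_{n,k}\le \operatorname{length}(\gamma^{(i)}_k)-d(o,C_n)$; making the delicate step rigorous — that the shrinking gates $C_n$ recede to exactly the distance of the ideal points they enclose, so that these tails vanish — is where the real work lies, and the punctured-disc picture is convenient here because the gates encircle a \emph{single} puncture and the nested punctured discs organize the tail estimates.
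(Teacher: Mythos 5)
Your setup---the inversion to a punctured disc, the Cauchy--Schwarz estimate $\int_0^1 L(s)^2\,\frac{ds}{s}\le 2\pi\,\mathcal A(X)$, and the resulting gate circles $C_n$ of conformal length $\ell_n\to 0$---is exactly the paper's mechanism, transplanted from the circles $\eta_r=\{|z|=r\}$, $r\to\infty$, to the circles $\{|w|=s_n\}$. But the argument is not closed, and you say so yourself: the control of the tail lengths $\tau^{(i)}_{n,k}$ is ``where the real work lies,'' and that step is essentially the content of the lemma. Showing that the gates $C_n$ come within $o(1)$ in the metric of $\hat X$ of \emph{both} $p_1$ and $p_2$ already forces $d(p_1,p_2)=0$; trying to extract it from near-minimizing connectors between $x_k$ and $y_k$, or from paths issuing from a basepoint $o$, only yields $\tau^{(i)}_{n,k}\le d(o,p_i)-d(o,C_n)+o(1)$, and you have no independent reason why $d(o,C_n)$ should converge to $d(o,p_i)$ for both ideal points simultaneously. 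As written, the proposal assumes what it must prove.

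The missing idea is to replace near-minimizers by finite-length \emph{approach curves}. Since $X$ is a length space, so is its completion $\hat X$, and for each $p_i\in\hat X\setminus X$ there is a curve $\gamma_i\co[0,a)\to X$ of finite length with $\gamma_i(t)\to p_i$ (concatenate near-geodesics between the terms of a rapidly converging Cauchy sequence). Truncating, one may assume $\gamma_i$ has total length $<\varepsilon/4$, where $\varepsilon=d(p_1,p_2)$. Then \emph{every} point of $\gamma_i$ lies within $\varepsilon/4$ of $p_i$: the tail estimate is free and requires no information about where the gates sit. Since $\gamma_i$ has no limit point in $X$ and the metric is locally bi-Lipschitz to the Euclidean one, $\gamma_i$ leaves every compact subset of $\R^2$, hence crosses every sufficiently large circle $\eta_r$ (equivalently, every sufficiently small gate in your coordinates). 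Choosing one gate of length $<\varepsilon/4$ and joining the two crossing points along it gives $d(p_1,p_2)<3\varepsilon/4$, the desired contradiction. This is precisely how the paper argues; with this substitution your proof goes through.
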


Thus, either $X$ is complete or $\hat X \setminus X$ has exactly one point $p$.
In the latter case, the space $\hat X$ can display a rather wild   behavior
near $p$. For instance, it may not  be locally compact around $p$, see Example \ref{ex: non} below.  Even if  $X$ has curvature larger than $1$ and  $\hat X$ is compact, thus homeomorphic to $\mathbb S^2$, the geometry around  $p$ can be rather wild,
 see Example \ref{ex: Romney} below.

 The geometry of the completion $\hat X$ at the \emph{singular point} $\hat X\setminus X$  turns out to be much tamer  if the curvature on $X$ is assumed to be integrable.

Recall first that the  Hausdorff (=canonical Riemannian)  area  $\mathcal H^2$ on the conformal plane $X^u$ is the multiple  $e^{2u} \cdot \mathcal L^2 _{\R ^2}$ of the Lebesgue area $\mathcal L^2$.    Thus the \emph{total area} of $X^u$ equals $\mathcal A(X^u)= \int _{\R^2} e^{2u}$.

The curvature of the conformal plane $X^u$ equals $K=e^{-2u} \cdot \Delta u$.  Thus, the 
(integral) 
boundedness of the curvature of $X^u$, is  the analytic assumptions
$\Delta u \in L^{\infty} (\R^2)$  ($\Delta u \in L^{1} (\R^2)$).
If  $\Delta (u)\in L^1 (\R^2)$ then
$$\mathcal K(X^u):=\int _{\R^2} \Delta u  \,d\mathcal L^2 _{\R^2}= \int _{X^u}K  \, d\mathcal H^2 _X$$ 
is called the total curvature of $X^u$.

Most parts of the next result are scattered through the literature:

\begin{thm} \label{thm: main}
Let $X=X^u$ be a conformal plane of finite area $\mathcal A(X)$ and finite total curvature 
$\mathcal K(X)$.  Then  $\mathcal K (X) \geq  2\pi$. 
If $\mathcal K(X) >2\pi$ then $X^u$ is not complete.

If $X$ is not complete then the completion $\hat X$ is a sphere which has bounded integral curvature in the sense of Alexandrov.

Upon a conformal  identification of $\R^2$ with $\mathbb S^2\setminus \{p \}$, the function 
$u$ defines a $\delta$-subharmonic function on $\mathbb S^2$, in the complete and in the non-complete case.
\end{thm}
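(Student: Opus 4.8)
The plan is to transport the whole problem to the round sphere and then read off every assertion from the theory of surfaces of bounded integral curvature. Fix a conformal identification of $\R^2$ with $\mathbb S^2\setminus\{p\}$ by stereographic projection and write the round metric as $e^{2\phi}\delta_{Eucl}$, where $\phi$ is smooth on $\mathbb S^2\setminus\{p\}$ with the usual logarithmic decay towards $p$. Putting $\psi:=u-\phi$, the conformal plane $X^u$ becomes the metric $e^{2\psi}g$ on the punctured sphere, with $g=g_{\mathbb S^2}$, and the conformal change formula reads, in the sense of distributions on $\mathbb S^2\setminus\{p\}$ and with the sign convention of $K=e^{-2u}\Delta u$,
$$\Delta_g\psi \;=\; K\,e^{2\psi}-1,$$
where $K$ is the curvature of $X$. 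Since $\Delta u\in L^1(\R^2)$, the measure $K\,e^{2\psi}\,dA_g=K\,d\mathcal H^2_X$ has finite total variation $\int_X|K|\,d\mathcal H^2<\infty$, and together with the finite mass of $dA_g$ this shows that $\Delta_g\psi$ is already a finite signed measure on $\mathbb S^2\setminus\{p\}$.

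The heart of the matter is to upgrade this to the statement that $\psi$ --- equivalently $u$ under the identification above --- is $\delta$-subharmonic on all of $\mathbb S^2$, i.e.\ extends across $p$ as a difference of two subharmonic functions. Working in a conformal disc centred at $p$, I would use the Riesz decomposition to write $\psi$ as a harmonic term plus the logarithmic potentials of the positive and negative parts of the finite measure $\Delta_g\psi$; these potentials are subharmonic, so the only possible obstruction to a $\delta$-subharmonic extension is the behaviour of $\psi$ at the puncture. This is exactly where the finiteness of the area is decisive: the integrability of $e^{2\psi}$ near $p$ rules out any singularity of $\psi$ worse than logarithmic and bounds the mass that the curvature can concentrate at $p$, so that $\Delta_g\psi$ extends to a finite signed measure on the whole sphere with a single atom $m_p:=\Delta_g\psi(\{p\})$. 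I expect this removable-singularity step --- the passage from finite area and finite total curvature to a single finite concentrated curvature at $p$ --- to be the main obstacle; it is the local regularity result at the foundation of Reshetnyak's theory, and it is precisely what fails, producing the wild completions, when only a pointwise bound $K\geq 1$ is imposed, cf.\ Example \ref{ex: Romney}. This already yields the last assertion of the theorem, in both the complete and the non-complete case.

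With $\delta$-subharmonicity in hand, the structural statements follow from standard results on surfaces of bounded integral curvature. By Reshetnyak's realisation theorem the data $(\mathbb S^2,\Delta_g\psi)$ determines a metric of bounded integral curvature on the topological sphere $\mathbb S^2$ whose restriction to $\mathbb S^2\setminus\{p\}$ is isometric to $X$. Its curvature measure $\omega$ equals $K\,d\mathcal H^2_X$ on $X$ and carries a single atom $m_p=2\pi-\theta_p$ at $p$, where $\theta_p\geq 0$ is the cone angle at $p$; the basic fact that the curvature of a single point of a surface of bounded integral curvature never exceeds $2\pi$ gives $m_p\leq 2\pi$. If $X$ is not complete, then $p$ lies at finite distance and is a genuine cone point, the realised metric is compact, and by uniqueness of metric completions this compact surface is exactly $\hat X$; thus $\hat X$ is homeomorphic to $\mathbb S^2$ and has bounded integral curvature, which is the third assertion.

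It remains to extract the two curvature inequalities, and these hold in both cases. The Gauss--Bonnet theorem for surfaces of bounded integral curvature gives $\omega(\mathbb S^2)=2\pi\chi(\mathbb S^2)=4\pi$; splitting $\omega$ into its part on $X$ and its atom at $p$ yields
$$\mathcal K(X)=\omega(X)=4\pi-m_p=2\pi+\theta_p\geq 2\pi,$$
which is the first assertion. For the second, $\theta_p=0$ (the maximal atom $m_p=2\pi$) corresponds to a cusp at infinite distance and hence to a complete $X$ with $\mathcal K(X)=2\pi$, whereas $\theta_p>0$ corresponds to a cone point at finite distance and hence to an incomplete $X$. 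In particular $\mathcal K(X)>2\pi$ forces $\theta_p>0$ and therefore incompleteness, completing the proof.
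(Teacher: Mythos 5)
Your route is genuinely different from the paper's, and it has a real gap at its center. The paper treats the complete case by quoting Cohn--Vossen and Shiohama for $\mathcal K(X)=2\pi$ and Huber for $\delta$-subharmonicity, and in the non-complete case it argues \emph{geometrically}: a packing argument (lower area bounds for balls of small total positive curvature) gives compactness of $\hat X$, then short convex Jordan curves $\Gamma_j$ around $p$ are constructed, the enclosed domains are cut out and replaced by round hemispheres, and Reshetnyak's stability theorem for limits of surfaces with uniformly bounded integral curvature shows that $\hat X$ is a BIC sphere; the $\delta$-subharmonicity of $u$ is then a \emph{consequence} of the uniformization theorem for BIC surfaces. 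You invert this logic: you want to first prove that $\psi=u-\phi$ is $\delta$-subharmonic across the puncture by a removable-singularity argument and then read off everything else from Reshetnyak's realization theorem and Gauss--Bonnet. But the step you yourself flag as ``the main obstacle'' --- that finite area plus finite total curvature force $\Delta_g\psi$ to extend to a finite signed measure on all of $\mathbb S^2$ with a single atom at $p$ --- is precisely the hard content of the theorem and is asserted rather than proved. After the Riesz decomposition the residual harmonic term on the punctured disc can a priori have an arbitrary isolated singularity, and excluding everything worse than $c\log|z|$ from the integrability of $e^{2\psi}$ alone is essentially Huber's theorem; the paper invokes Huber only in the complete case and explicitly remarks that it does not know how to run such an analytic argument in the non-complete case (``Possibly, a more analytic proof \dots can be found''). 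So the core of your proof is missing.

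There are also two structural problems in the second half. Reshetnyak's realization theorem yields a metric of bounded integral curvature on the closed surface only when every atom of the curvature measure is strictly less than $2\pi$; in the complete case the atom at $p$ equals $2\pi$ (a cusp), the ``realized metric'' is not a metric on the compact sphere, $p$ sits at infinite distance, and your identification of the realized surface with a completion $\hat X$ breaks down --- so the dichotomy ``atom $=2\pi$ iff complete'', on which your proof of the second assertion rests, needs an independent justification (the paper obtains it from Cohn--Vossen plus Shiohama). Relatedly, the bound $m_p\le 2\pi$ cannot be quoted as a property of BIC surfaces before you know that the extended measure is the curvature measure of such a surface; it has to come from the finite-area hypothesis, which again requires the unproven extension step. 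The Gauss--Bonnet computation $\mathcal K(X)=4\pi-m_p=2\pi+\theta_p$ is the right endgame (the paper records exactly this in the remarks following its proof), but everything feeding into it is currently unestablished.
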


Recall that  a function is called $\delta$-subharmonic   if locally around any point it can  represented as  a difference of two subharmonic functions.

The  theory of surfaces with integral curvature bounds implies that in the non-complete case, the area growth is at most quadratic 
 at the point $p=\hat X \setminus X$.
Moreover,
limes inferior arising in Proposition \ref{prop: unif} is a limit and equals 
$\frac {\mathcal K(X)} 2  -\pi$, see Section \ref{subsec: int}.

\subsection{Uniformly bounded curvature}

A final application  answers the question investigated in \cite{Smooth} and relates this question to the theory of manifolds with both-sided cuvature bounds, \cite{Ber-Nik}.  Slightly weaker results have been obtained in \cite{Smooth} by direct methods.

\begin{prop} \label{prop: final}
Assume that the plane $X=X^u$ has finite area and that the total  curvature $\mathcal K(X)$
equals $4\pi$. If the curvature $K$ 
  of $X$ is uniformly bounded 
 then the completion
$\hat X$ of $X$ is a Riemannian manifold conformally equivalent to the round sphere $\mathbb S^2$.
For the conformal factor $e^{2 \hat u}$, the function $\hat u$ is of
 class $\mathcal C^{1,\alpha}$  on $\mathbb S^2$,  for every $\alpha <1$.

Even if  the curvature $K$ is continuous  on  $\hat X$, the function $\hat u $ does not need to be $\mathcal C^{1,1}$.  If $K$ is $\beta$-Hoelder on $\mathbb S^2$ then $\hat u$ is $\mathcal C^{2,\beta}$. 
\end{prop}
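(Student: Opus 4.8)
The plan is to reduce everything to the structure theory supplied by \tref{thm: main} and then run an elliptic bootstrap whose starting point is controlled by the \emph{exact} value $\mathcal K(X)=4\pi$. Since $\mathcal K(X)=4\pi>2\pi$, \tref{thm: main} shows that $X$ is incomplete, that $\hat X$ is a sphere of bounded integral curvature, and that under a conformal identification $\R^2\cong\mathbb S^2\setminus\{p\}$ the function $u$ (corrected by the smooth stereographic factor) extends to a $\delta$-subharmonic function $\hat u$ on $\mathbb S^2$, so that $\hat X=(\mathbb S^2,e^{2\hat u}g_{round})$. It therefore suffices to prove the asserted regularity of $\hat u$. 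Denoting by $\omega$ the curvature measure of $\hat X$, on the smooth part $X$ we have $d\omega=K\,d\mathcal H^2$ with $|K|\le M:=\|K\|_{\infty}$, while at the puncture $\omega$ may a priori carry an atom $a=\omega(\{p\})$.

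First I would pin down $a$. By the Gauss--Bonnet theorem for closed surfaces of bounded integral curvature, $\omega(\hat X)=2\pi\chi(\mathbb S^2)=4\pi$, and since the absolutely continuous part contributes exactly $\int_X K\,d\mathcal H^2=\mathcal K(X)=4\pi$, the atom vanishes: $a=0$, i.e.\ the total angle at $p$ equals $2\pi$ and $p$ is not a cone point. This is the one place where the hypothesis $\mathcal K(X)=4\pi$ enters decisively: for $\mathcal K(X)<4\pi$ one would get $a>0$, a genuine conical singularity, and $\hat u$ would be unbounded near $p$.

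The crux --- and the step I expect to be the main obstacle --- is to turn ``$a=0$ and $K$ bounded'' into boundedness of the conformal factor across $p$. I would work in a flat isothermal chart around $p=0$, in which the metric reads $e^{2v}|dz|^2$ and, by the representation of $\delta$-subharmonic functions, $\Delta v=\omega$ holds distributionally; since $a=0$ there is no Dirac mass, so $\Delta v=K e^{2v}\,\mathcal L^2$ with a right-hand side in $L^1$ whose mass on $B_r(0)$ tends to $0$ as $r\to 0$. Because the concentrated mass at $p$ equals $a=0$, which lies strictly below the sharp threshold $4\pi$ of the Brezis--Merle inequality, one obtains $e^{2v}\in L^q_{\mathrm{loc}}$ for some $q>1$, hence $v\in C^{0,\gamma}$ and in particular $v\in L^\infty_{\mathrm{loc}}$ near $p$; that the Brezis--Merle constant is exactly $4\pi$ is the analytic reflection of the borderline total curvature $4\pi$. (Equivalently, $|\omega|\le M\,\mathcal H^2$ together with $a=0$ forces two-sided Alexandrov curvature bounds on $\hat X$, and one may instead invoke the regularity theory of \cite{Ber-Nik}; the elliptic route, however, yields the sharp Hölder statements below.)

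Once $v$ is locally bounded, the rest is the standard elliptic ladder. With $v\in L^\infty_{\mathrm{loc}}$ and $K\in L^\infty$ the datum $g:=K e^{2v}$ lies in $L^\infty_{\mathrm{loc}}$, so Calderón--Zygmund estimates give $v\in W^{2,q}_{\mathrm{loc}}$ for every $q<\infty$ and therefore $v\in C^{1,\alpha}_{\mathrm{loc}}$ for every $\alpha<1$; globally this reads $\hat u\in C^{1,\alpha}(\mathbb S^2)$ and exhibits $\hat X$ as a $C^{1,\alpha}$ Riemannian metric conformal to the round sphere. If moreover $K\in C^{0,\beta}(\mathbb S^2)$, then $e^{2v}$ is Lipschitz, in particular of class $C^{0,\beta}$, so $g\in C^{0,\beta}$, and Schauder estimates upgrade this to $v\in C^{2,\beta}$, i.e.\ $\hat u\in C^{2,\beta}$. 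For the sharpness of the first claim I would recall that $\Delta v=g$ with $g$ merely continuous need not have $v\in C^2$: choosing a classical continuous datum whose associated potential fails to be $C^{1,1}$ and realizing it as $K e^{2v}$ with continuous $K$ produces an example in which $\hat u\notin C^{1,1}$ although $K$ is continuous.
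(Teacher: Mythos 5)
Your proposal is correct and follows essentially the same route as the paper: invoke \tref{thm: main}, use Gauss--Bonnet to kill the atom of the curvature measure at $p$ (this is exactly where $\mathcal K(X)=4\pi$ enters), and then run elliptic regularity for $\hat u$ on $\mathbb S^2$, with Schauder giving the $\mathcal C^{2,\beta}$ upgrade and a continuous-Laplacian-but-not-$\mathcal C^{1,1}$ potential giving the sharpness example. The only differences are that you make explicit, via Brezis--Merle, the preliminary step of getting $e^{2\hat u}$ locally bounded near $p$ before bootstrapping --- a point the paper's proof passes over silently --- while for the final counterexample the paper cites a concrete construction of Sabitov where you only sketch the standard potential-theoretic one.
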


\subsection{Acknowledgements}  I thank Qinfeng Li for helpful communication giving rise to this note.  I am grateful to Matthew Romney
for details about the example appearing in \cite{CR}, \cite{Romney} and for the reference  \cite{Itoh}.  I would like to thank Anton Petrunin for helpful discussions and to  Dima Panov for sharing   his examples of large piecewise spherical spheres, which has simplified the proof of Proposition \ref{prop: 1}. For useful comments I thank Paul Creutz, Qinfeng Li and Matthew Romney.

\section{From the sphere to the plane}
\subsection{One-point complements in spheres}
In the proof of Proposition \ref{prop: unif} below, we are going to freely use the vocabulary of
metric geometry.  We refer to \cite{NR} for the definitions and properties, in particular for the notion of \emph{weak conformality}.

\begin{proof}[Proof of Proposition \ref{prop: unif}]
By assumption, we have a geodesic metric space $\hat X$, homeomorphic to $\mathbb S^2$
and a point $p\in \hat X$ such that $X=\hat X\setminus \{p \}$  has a smooth Riemannian metric.
By assumption, the area growth at $p$ is at most quadratic.  In particular, $\hat X$
has finite 2-dimensional  Hausdorff measure.

By \cite[Theorem 1.3]{NR}, there exists a weakly quasiconformal map $h:\mathbb S^2 \to \hat X$ from the round sphere $\mathbb S^2$.

The area growth assumption implies that  $h$ is a homeomorphism, \cite[Theorem 7.4]{NR}. The map $h$ restricts to a weakly quasiconformal map from $\mathbb S^2\setminus 
h^{-1} (p) \to X$.  Since $h^{-1} (p)$ is a singleton, $\mathbb S^2\setminus  h^{-1} (p)$
is conformally equivalent to $\R^2$.   Therefore, we have a weakly quasiconformal map between smooth Riemannian manifolds
$\hat h: \R^2 \to X$.  If $X$ were a conformal disc, we would obtain a weakly quasiconformal homeomorphism from $\R^2$ to the disc $D$. Such a homeomorphism  cannot exist, see, for instance,  \cite[p. 2-4]{quasi}. 
\end{proof}

Assuming that $\hat X$ has bounded integral curvature in the sense of Alexandrov, \cite{AZ}, \cite{Reshetnyak-GeomIV}, \cite{Troyanov}, a shorter proof of Proposition \ref{prop: unif}
is possible.  Indeed, in this case,  the uniformization theorem, \cite[Section 7]{Troyanov} states that the metric on
$\hat X$ is defined as $e^v\cdot \delta _{\mathbb S^2}$, where the function $v$ in the conformal factor is $\delta$-subharmonic on $\mathbb S^2$.  This directly describes $X =\hat X\setminus \{p\}$ as conformally changed 
$\mathbb S ^2$ without  point.

\subsection{Some examples of conformal planes} We are going to prove Proposition \ref{prop: 1} and Proposition \ref{prop: area}.     Observe first, that rescaling the metric by a positive constant  $\lambda \leq 1$ provides again a metric in the same class
(curvature at least 1, finite area).  Thus, it suffices to find conformal planes of curvature $\geq 1$ and arbitrary large finite area, respectively, finite area and diameter arbitrary close to $2\pi$.

Consider a piecewise spherical metric on $\mathbb S^2$ such that the total angle is larger than $2\pi$ in at most one singularity $p$. 
In the arising metric space $Y$ the curvature is constant $1$ outside $p$ and finitely many further points $p_1,..,p_n$.  Around any point $p_i$ the metric  is a spherical cone metric over a circle  of length  less than $2\pi$. 
The metric around $p_i$ can be smoothened in an arbitrary small neighborhood, such that the arising metric is smooth and  has curvature $\geq 1$,
 \cite[Lemma 2.4]{Itoh}. Moreover, by construction, the new smooth metric has almost the same diameter and area as the original one.

     Performing this operation around every vertex $p_1,...,p_n$, we obtain a metric space $Y_{\varepsilon}$ homeomorphic to $\mathbb S^2$, such that $X:=Y_{\varepsilon} \setminus p$ is a smooth Riemannian manifold of curvature $\geq 1$.  This manifold $X$ is a conformal plane 
by Proposition \ref{prop: unif}; it  has finite area  and diameter arbitrary close (by the choice of  $\varepsilon$) to the area  and the diameter  of $Y$.

 Thus, in order to prove  Proposition \ref{prop: 1} and Proposition \ref{prop: area} it suffices to find piecewise spherical metrics  $Y$ on $\mathbb S^2$ with at most one singularity of total angle larger than $2\pi$ and 
arbitrary large area, respectively,  diameter arbitrary close to $2\pi$.    

\begin{proof}[Proof of Proposition \ref{prop: area}]
Consider an interval $I=[a,b]$ of large  length $N$. Let $Z$ be the spherical join of a point $p$ and $I$.   The space $Z$ is topologically a closed disc and  it has curvature one in the interior. The boundary of $Z$ is built by two 
geodesics $pa$ and $pb$ of length $\pi /2$
and by the local geodesic $I$.  The angle at $a$ and $b$ is $\frac \pi 2$, the  total angle at $p$ equals $N$.  The area of $Z$ equals $N$.   
 
Consider the doubling $Y$ of $Z$ along the boundary. Then $Y$ is a piecewise spherical metric on the $2$-sphere, with 3 singularities of total angles $\pi, \pi, 2N$ and with total area $2N$. Due to the consideration preceding the proof, this suffices for the conclusion. 
\end{proof}

\begin{proof}[Proof of Proposition \ref{prop: 1}]
Fix  $\varepsilon <\frac \pi 2$. Consider a triangle $D=pxy$ in the round sphere $\mathbb S^2$ with  $px$ of length $\varepsilon$, with $\angle pxy =\frac \pi 2$ and with the length of $xy$ equal to $\pi-\varepsilon$.  Then $\angle pyx < \frac \pi 2 < \angle ypx$.

Consider another isometric  copy $D'=pxy'$ of the triangle and glue $D$ and $D'$ along the common  side $px$.  The arising space $Z$ is homeomorphic to a closed disc. It has constant curvature $1$ in the interior. The boundary is built by 4 geodesics $py$, $py'$,
$yx$ and $y'x$.  The angle at $x$ equals $\pi$, the angles at $y$ and $y'$ are smaller than $\pi$, the angle at $p$ is larger than $\pi$.  The diameter of $Z$ is at least twice the distance of $y$ and $y'$ which is larger than $2\pi- 4\varepsilon$. 

 The doubling $Y$ of $Z$ along the boundary  $\partial Z$ is homeomorphic to $\mathbb S^2$ and has diameter at least $2\pi -4\varepsilon$.
Moreover, $Y$ has piecewise constant curvature  $1$ and at exactly one singularity $p$
the  total angle is larger than $2\pi$.
Due to the consideration preceding the proof,  this  suffices for the conclusion, since $\varepsilon$ can be chosen arbitrary small.
\end{proof}

As a consequence we provide the following  negative answer to  
\cite[Question 8.7]{Liouv}.  We refer to the  discussion in \cite[Section 7]{Liouv} for motivation and relation with the Levy--Gromov inequality.

\begin{cor} \label{prop: 3}
For any $\epsilon>0$ there exist a smooth  Jordan curve $\Gamma$ in $\R^2$ bounding a Jordan domain $\Omega$   and
  a smooth $u:\R^2 \to \R$ satisfying  \eqref{eq: Liouv+},  such that 
$\int _{R^2} e^{2u} <\infty$ and the following holds true:
$$(\int _{\Gamma}  e^u)^2 \leq \varepsilon \cdot \int _{\Omega} e^{2u}  \cdot \int _{\R^2\setminus \Omega}  e^{2u} \;.$$
\end{cor}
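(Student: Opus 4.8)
The plan is to read Corollary~\ref{prop: 3} as an isoperimetric-type inequality that is violated by the examples already constructed for Proposition~\ref{prop: 1}. The left side $(\int_\Gamma e^u)^2$ is the square of the Riemannian length of a curve $\Gamma$ in the conformal plane $X^u$, while $\int_\Omega e^{2u}$ and $\int_{\R^2\setminus\Omega}e^{2u}$ are the Riemannian areas of the two regions $\Gamma$ cuts the plane into. So the inequality asserts that one can make a \emph{bisecting} curve whose length-squared is arbitrarily small compared with the product of the two enclosed areas. The strategy is to take the conformal plane $X=X^u$ produced in Proposition~\ref{prop: 1} --- a plane of curvature $\geq 1$, finite area, and diameter close to $2\pi$ --- whose completion $\hat X$ is (essentially) the doubled spherical object $Y$ with a single large-angle singularity $p$, and to locate $\Gamma$ along the ``equator'' $\partial Z$ where the doubling took place.

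\smallskip

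The key steps, in order. First I would make precise the dictionary between the two formulations: a smooth Jordan curve $\Gamma\subset\R^2$ has Riemannian length $\int_\Gamma e^u$, and the Jordan domains $\Omega$, $\R^2\setminus\Omega$ have Riemannian areas $\int_\Omega e^{2u}$ and $\int_{\R^2\setminus\Omega}e^{2u}$; thus the claim is exactly that for every $\varepsilon>0$ there is a conformal plane satisfying \eqref{eq: Liouv+} and a smooth simple closed curve with
\[
\ell(\Gamma)^2 \;\leq\; \varepsilon\cdot \mathcal A(\Omega)\cdot \mathcal A(X\setminus\Omega).
\]
Second, I would use the family of examples from Proposition~\ref{prop: 1}: for small $\eps$ we have the doubled sphere $Y$ of two bigon-like pieces $Z$, glued along $\partial Z$, and after smoothing the small-angle vertices and removing $p$ we obtain $X=X^u$. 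The curve $\partial Z$ (the fixed-point set of the doubling involution) is a local geodesic on $Y$ of \emph{bounded} length --- it consists of the four boundary arcs $py,\ py',\ yx,\ y'x$, so its length is at most $\pi/2+\pi/2+(\pi-\eps)+(\pi-\eps)<3\pi$, a universal constant independent of $\eps$. Third, and this is the point, the doubling makes $\partial Z$ bisect $Y$ into two isometric halves, each of area $\mathcal A(Z)$, while the total area $\mathcal A(X)$ can be driven to be \emph{large}: rescaling the metric by $\lambda\le 1$ (allowed since it preserves curvature $\ge1$ and finite area) multiplies areas by $\lambda^2$ and the curve length by $\lambda$, so $\ell(\Gamma)^2\sim\lambda^2$ while each area $\sim\lambda^2\mathcal A(X)$, and the ratio $\ell(\Gamma)^2/\big(\mathcal A(\Omega)\mathcal A(X\setminus\Omega)\big)\sim 1/(\lambda^2\mathcal A(X))$ can be forced below $\varepsilon$ by taking the total area large. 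Fourth, I would transport everything back to $\R^2$: by Proposition~\ref{prop: unif}, $X=Y_\eps\setminus\{p\}$ is a conformal plane, so there is a conformal identification $\R^2\cong X$; under it the smoothed image of $\partial Z$ becomes a smooth Jordan curve $\Gamma$ (after a $\mathcal C^\infty$ perturbation, since $\partial Z$ is only piecewise geodesic at $x,y,y'$), bounding a Jordan domain $\Omega$, and the curvature condition $K\ge 1$ translates into \eqref{eq: Liouv+}. Choosing $\lambda$ small (equivalently total area large) completes the estimate.

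\smallskip

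\textbf{The main obstacle} I expect is the regularity bookkeeping across the smoothing and the conformal identification. The curve $\partial Z$ lives on the \emph{singular} space $Y$ and passes through the cone points $y,y',x$ where the smoothing of Proposition~\ref{prop: 1} modifies the metric; I must check that after smoothing these vertices the length of the image of $\partial Z$ is changed by an amount controlled by $\eps$ (so it stays bounded), that the image remains a rectifiable simple closed curve, and that it can be $\mathcal C^\infty$-perturbed into a genuinely \emph{smooth} Jordan curve in $\R^2$ without more than a small change of $\ell(\Gamma)$ and of the two enclosed areas. A second, softer point is to confirm that the conformal identification $h\colon\R^2\to X$ of Proposition~\ref{prop: unif} carries Riemannian length and area to the integrals $\int_\Gamma e^u$ and $\int e^{2u}$ — but this is immediate, since under a conformal map these are exactly the intrinsic length and area, which are preserved. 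The conceptual content — that the fixed curve of the doubling has bounded length while the areas it bisects are large — is robust; the work is in the analytic smoothing estimates, all of which are controlled by $\eps$ and the scaling parameter $\lambda$.
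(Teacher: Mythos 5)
Your overall framing is right (the inequality is an isoperimetric-type statement to be violated, lengths and areas translate to $\int_\Gamma e^u$ and $\int e^{2u}$, and the examples come from Section 2), but the specific choice of example and of $\Gamma$ does not work, and this is where the actual content of the proof lies. The construction of Proposition \ref{prop: 1} that you invoke has total area bounded a priori: $Z$ is a union of two geodesic triangles in the round sphere, so $\mathcal A(Y)=2\mathcal A(Z)<8\pi$ (and in fact the triangles are thin, so the area is small), while the doubling curve $\partial Z$ has length close to $4\pi$. Hence for that example the ratio $\ell(\Gamma)^2/\bigl(\mathcal A(\Omega)\,\mathcal A(X\setminus\Omega)\bigr)$ is large, and "taking the total area large" is simply not available there. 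Your rescaling remark also points the wrong way: replacing the metric by $\lambda^2 g$ with $\lambda\le 1$ multiplies the ratio by $1/\lambda^2\ge 1$, so it can only make things worse. Finally, even if you switch to the arbitrarily-large-area example of Proposition \ref{prop: area} (the doubled spherical join of a point with an interval of length $N$), the bisecting curve $\partial Z$ there has length $\pi+N$ while each half has area $N$, so $\ell^2/(A_1A_2)\approx N^2/N^2$ stays of order $1$; a curve that bisects the area is essentially the worst possible choice for violating a Cheeger-type inequality.

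The paper's proof uses the large-area examples of Proposition \ref{prop: area} but takes $\Gamma$ to be the boundary of a \emph{small metric ball} $\Omega$ of fixed radius $\pi/10$ sitting in a region where the metric is the round spherical one. Then $\ell(\Gamma)=l_0$ and $\mathcal A(\Omega)=A_0$ are fixed constants independent of $N$, while $\mathcal A(\R^2\setminus\Omega)=A-A_0\to\infty$ as the total area $A$ grows; so $l_0^2\le \varepsilon\, A_0\,(A-A_0)$ holds once $A$ is large enough. The missing idea in your proposal is precisely this asymmetric choice: keep one side of $\Gamma$ (and $\Gamma$ itself) of fixed size and push all the growth into the other side, rather than cutting the surface in half.
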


\begin{proof}
The construction in the proof of Proposition \ref{prop: area} provides conformal metrics $X^u$
on $\R^2$ with curvature $\geq 1$ and arbitrary large but finite area $A=A(u)$. Moreover, by construction, any of this metric spaces $X^u$ contains a metric ball $\Omega$ of radius $r=\frac \pi {10}$ in the round sphere $\mathbb S^2$.  Let $l_0$  and $A_0$ denote the length of $\partial \Omega $, respectively   the area of $\Omega$  (both quantities measured in $X^{u}$, hence in $\mathbb S^2$).

   Then the right hand side
$(\int _\Gamma e ^u  )^2$ 
of the claimed inequality is just $l_0^2$ while  the factors on the right hand side are $A_0$ and $A-A_0$ respectively.
Thus, choosing $u$ such that the area  $A=\mathcal A (X^u)$  satisfies
$$A\geq A_0 + \frac {l_0} {\sqrt \varepsilon} \;,$$
we finish the proof.
\end{proof}

\section{Planes of finite area}

The next argument is  contained in  the proof of 
\cite[Theorem 1.4]{Liouv}.
\begin{proof}[Proof of Lemma  \ref{prop: incompl}]
The  space $X$ is a length space, hence so is  the completion $\hat X$, \cite[p. 43]{BBI01}. More precisely,
for any $x\in \hat X\setminus X$ there exists a curve of finite length $\gamma _x: [0,a) \to X$,  such that in $\hat X$ we have 
$$\lim _{t\to a} \gamma _x (t)= x \;. $$

Assume that we have two different points $x,y \in \hat X\setminus X$. Denote by $ \varepsilon>0$ the distance
between $x$ and $y$. 
Consider  curves $\gamma_x, \gamma _y$  of finite length in $X$ converging to $x$ and $y$,  as above.  By changing the starting points, we may assume that $\gamma _x$ and $\gamma _y$ have length smaller than $\frac \varepsilon 4$.   In order to obtain a contradiction, it suffices to find points on $\gamma_x$ and $\gamma _y$ with distance less than $\frac \varepsilon 4$ from each other.

Our space $X$ is the plane $\R^2$ with  the Euclidean metric changed by the conformal factor $e^{2u}$.  Denote by $\eta _r$ the Euclidean circle around $0$ of radius $r$.
We express the finiteness of the area in polar coordinates and obtain by the Hoelder inequality
$$\infty >\mathcal A(X) =\int _{\R^2} e^{2u}  = \int _0 ^{\infty}  (\int _{\eta _r}e ^{2u})\,  dr  \geq \int _0 ^{\infty} \frac 1 {2\pi r} (\int _{\eta _r} e^u)^2 \,dr\;.$$  

The length of $\eta _r$ in the metric space $X$ is  $\int _{\eta_r} e^u$.  Therefore, 
we find  a sequence  $r_i \to \infty$  such that the length
of $\eta _{r_i}$ is smaller than $\frac \varepsilon 4$.

Since the curves $\gamma _x$ and $\gamma _y$ do not have limit points in $X$,  both curves  run to infinity in $\R^2$. Hence they both  intersect $\eta_ r$, for all sufficiently large $r$.  Thus, for sufficiently large $r_i$ as above,  we find  points in  the intersection of $\gamma _x$ and $\gamma _y$ with $\eta _{r_i}$. The distance between these intersection points in $X$ is  less than $\frac \varepsilon 4$, in contradiction to our assumption. Hence, $\hat X \setminus X$  contains at most one point.
\end{proof}

We are going to explain  that $\hat X$ does not need to be  locally compact at the
point $\{p \} = \hat X \setminus X$.

\begin{ex} \label{ex: non}
Consider the round sphere $\mathbb S^2$ with north pole $p$. Take a sequence  $U_j$ of small metric balls     centered   on a fixed meridian starting at $p$.
 We choose the metric balls pairwise disjoint, not containing $p$, but converging to $p$.
Change the metric conformally on $\mathbb S^2\setminus \{  p\}$ in the following way.
The conformal factor is constantly one outside the union of all $U_j$.  The subset $U_j$ has after the conformal change diameter approximately $1$ and area approximately 
$\frac 1 {j^2}$, thus $U_j$ becomes a long  and very thin finger sticking out of the sphere.  The new metric on $\mathbb S^2 \setminus p$ is
conformally equivalent to $\R^2$, it  has finite area and diameter. Moreover, it has infinitely many points with pairwise distances in the interval $[2,3]$. Hence, the completion $\hat X$ cannot be locally compact by the theorem of Hopf--Rinow. 
\end{ex}

The next example  shows that even if $\hat X$ is compact and $X$ has curvature at least $1$, the curvature does not need to be integrable and the area growth at the singularity $p=\hat X\setminus X$ can be superquadratic.   

\begin{ex} \label{ex: Romney}
Consider the metric on $\R^2$ with conformal factor $e^{-\frac 2 {|z|}} \cdot |z|^{-4}$ as in
\cite[Section 5.1]{Romney}, \cite[Section 4.1]{CR}.  The area growth of this metric space $Y$  at $p=0$ is superquadratic, \cite[p. 19]{Romney}. Euclidean balls around $0$
are metric balls around $p=0$ in $Y$ and they are convex.  $Y$ is smooth outside of $0$ and direct computations reveal that the metric has positive curvature outside of $p$; moreover, the curvature converges to $\infty$ at  $p$.  Consider now a small closed
 ball $B$ around $0$ in $Y$ such that the curvature is larger than $1$ outside of $0=p$
and such that $\partial B$ has length $2\pi s < 2\pi$.  
Glue to $B$ along $\partial B$  a round hemisphere of radius $s$. By the gluing theorem (for instance, \cite{Petruning}), the arising sphere has  curvature $> 1$ outside the singularity  $0$. 
 Smoothing the metric along $\partial B$, (see for instance, \cite{Itoh}),
we obtain a smooth metric $\hat X$ on  $\mathbb S^2$, which has curvature $\geq 1$ everywhere outside a single point $p$ and that around $p$ the metric is isometric to $Y$.   By construction (and the uniformization theorem), the metric on 
$\hat X \setminus \{ p\}$ is conformally equivalent to $\R^2$. 
\end{ex}

 It seems possible but technically more involved  to construct an example of a conformal plane 
$X=X^u$ of curvature $\geq 1$ and finite area, such that the diameter of $X$ is $2\pi$ (thus  strengthening  Proposition \ref{prop: 1}). In such an example the completion $\hat X$ has to be non-compact.

\section{Planes of finite area and curvature}
\subsection{Integral bound} \label{subsec: int} If the conformal plane $X=X^u$ has finite total curvature we can control the geometry at infinity much better:

\begin{proof}[Proof of Theorem \ref{thm: main}]
First assume that $X=X^u$ is complete.  Then
the curvature estimate $\mathcal K(X) \leq 2\pi$ is a classical theorem of Cohn-Vossen,
\cite[Satz 6]{CV}, valid also for complete  planes of infinite area.
 Given that the area is finite,
the equality $\mathcal K(X)=2\pi$  is proven  in \cite[Corollary]{Shiohama}.

Finally, due to \cite[Korollar]{Huber}
(or, alternatively, \cite[Satz 3]{Huber})
if  $X$ is complete  then
 the function  $u$  extends to  a $\delta$-subharmonic function on $\mathbb S^2$, once $\R ^2$ is identified with $\mathbb S^2$ without a point by a conformal transformation.

From now on we assume that $X$ is not complete. We consider the completion $\hat X$ and  let $p$ be the unique point in $\hat X \setminus X$, Lemma \ref{prop: incompl}.

 First, we claim that $\hat X$ is compact.  Otherwise, we find some $\varepsilon >0$ and infinitely many points $x_i \in X$ with pairwise distance larger than  $2\varepsilon$. Removing at most one point, we can assume that the distance of any $x_i$ to $p$ is larger than  $\varepsilon$.  Then the  closed balls $\bar B_{\varepsilon} (x_i)$ are pairwise disjoint and compact. Moreover, removing finitely many $x_i$ and using the finiteness of 
total curvature, we may assume  that the total positive curvature of any $\bar B_{\varepsilon} (x_i)$ is at most $\pi$.  Then, for any $i$, we can estimate the area of the ball as
  $$\mathcal A(B_{\varepsilon} (x_i) \geq \frac \pi 2 \cdot \varepsilon ^2\,,$$
due to \cite[Proposition 3.2]{Shioya-conv}, \cite[Theorem 9.1]{Reshetnyak-GeomIV}. Thus, the finiteness of $\mathcal A(X)$ contradicts the disjointness of the balls $\bar B_{\varepsilon} (x_i)$.

Therefore, $\hat X$ is compact. Due to the uniqueness of the one-point-compactification, $\hat X$ is homeomorphic to $\mathbb S^2$.

In order to prove that $\hat X$ is a surface with bounded integral curvature we present the metric on $\hat X$ as a limit of metrics with a uniform integral  bound on curvature, as in \cite[Section 8.4]{Reshetnyak-GeomIV}. 

We claim that there exists a sequence of simple closed curves $\Gamma _j$ in $X$,
 such that for the Jordan domains $p\in O_j$ in $\hat X$ of $\Gamma _j$ 
the following holds true: 
The closure $\bar O_j =O_j \cup \Gamma _j$ is convex in $\hat X$;  the diameter of $\bar O_j$ and the length of
$\Gamma _j$ are at most  $\frac 1 j$.

 Note, that any such $\Gamma _j$ would be of bounded turn and the variation from the side of $X\setminus O_j$ (thus the mean curvature) would be   non-positive, by convexity of $O_j$, cf. \cite[Theorem 8.1.3]{Reshetnyak-GeomIV}.  Moreover, by the Gauss--Bonnet formula and the bound on the total curvature of $X$, the total curvature of $\Gamma_j$ would be  uniformly bounded.

 Once such $\Gamma _j$ are found, we would cut out $O_j$ and replace it by the round hemisphere $\hat O_j$ with boundary of length $\ell (\Gamma _j)$.  The arising space $\hat X_j$ is a 
sphere with uniformly bounded integral curvature, \cite[Theorems 8.3.1,  8.3.2]{Reshetnyak-GeomIV}.  Moreover, identifying $\hat O_j$ with $O_j$ by any homeomorphism fixing $\Gamma _j$, we obtain a convergence of $\hat X_j$ to $\hat X$ in the sense   of \cite[Section 8.4]{Reshetnyak-GeomIV}. Thus, $\hat X$ would be of  integrally  bounded
curvature, \cite[Theorem 8.4.5]{Reshetnyak-GeomIV}.

 It remains to find the required curves $\Gamma_j$.  In order to find them, we fix $j$
and set $\delta =\frac 1 {10 j}$.
Consider the open ball $U=B_{\delta} (p)$.  We find an index $i$, such that  for all $k\geq i$,  the curves $\eta _k :=\eta _{r_k}$ constructed in the proof of Lemma  \ref{prop: incompl} have length $\ell (\eta _{r_k} ) <\delta$. 
  By construction, the Jordan domains $p\in U_k$ of $\eta _k$ are nested and their intersection consists of the point $p$ only. Choosing $i$ large enough, we may assume in addition, that $U_k\subset U$, for all $k\geq i$.

We fix this $\eta_i$.  By compactness and local contractibility, there is  some 
$\varepsilon >0$ such that no closed curve in $\bar U_i$ of length  at most $ \varepsilon$ can intersect $\eta _i$  and be homotopic to $\eta _i$ within  the punctured disc $\bar U_j \setminus \{p \}$.      We now find some $k>i$ such that $\ell (\eta_k ) <\varepsilon $.

In the compact annulus  $A$ bounded by $\eta_k$ and $\eta_i$ in $X$ we find a 
shortest non-contractible curve $\gamma$. This $\gamma$ is automatically simple closed. By the choice of $k$, this curve $\gamma$   has length at most $\epsilon$, and by the choice of $\epsilon$, the curve $\gamma$ does not intersect $\eta _i$.  The Jordan domain $p\in V$  of this curve
is contained in $U$, thus has diameter  at most $2\delta$. If $V$ were not convex, then 
two points on $\gamma$ could be connected within $A$ by a shorter curve. But this would contradict the minimal property of $\gamma$.    This finishes the construction of $\gamma =\gamma _j$  and, therefore, of the statement that $\hat X$ has   bounded integral curvature.

The final statement that the metric of $\hat X$ is conformal to the round metric on the sphere including $p$ is a direct consequence 
of the uniformization for such surfaces, \cite[Section 7]{Reshetnyak-GeomIV},  \cite{Troyanov}.
\end{proof}

\begin{rem}
We have used some geometric arguments in the non-complete case in the proof above.
Possibly, a more analytic proof of the statement using the full strength of \cite[Satz 3]{Huber} can be found.
\end{rem}

Some additional comments on the structure of $\hat X$  near $p=\hat X\setminus X$, in case that $X$ is not complete in Theorem \ref{thm: main}:

  Consider  the curvature measure 
$\hat {\mathcal K}$ on the sphere $\hat X$ with bounded integral curvature,  \cite[Chapter 5]{AZ}, \cite{Troyanov}.
This is a signed measure satisfying $\hat {\mathcal K} (\hat X)=4\pi$ by the Gauss--Bonnet theorem \cite[p. 20]{Troyanov}.  On the regular part $X$, the signed measure $ \hat {\mathcal K}$ equals $K\cdot \mathcal H^2$, where
$K$ is the Gaussian curvature.   Thus, $\hat  {\mathcal K} (\{p \})= 4\pi - \mathcal K (X)$.
 On the other hand, $\hat {\mathcal K} (\{p\})$ equals $2\pi -\theta$, where $\theta$ is the \emph{total angle} at 
the point $p$
 \cite[Lemma 8.1.1]{Reshetnyak-GeomIV}. Moreover, again by  
 \cite[Lemma 8.1.1]{Reshetnyak-GeomIV} and the coarea formula (or using    \cite[Theorem  9.10]{Reshetnyak-GeomIV})
$$\theta = \lim_{r\to 0} \frac {\mathcal H^1 (\partial B_r(p))} {r} = 2 \lim_{r\to 0} \frac {\mathcal H^2 (B_r(x))} {r^2}\;.$$

\subsection{Smoothness at infinity}
We are going to provide 
\begin{proof}[Proof of Proposition \ref{prop: final}]
 We can apply Theorem \ref{thm: main}. Identifying $\R^2$ conformally with the complement of a point $p$ in $\mathbb S^2$, we obtain that the completion $\hat X$ is 
a sphere with curvature bounded in the integral sense of Alexandrov. The \emph{curvature measure} $\hat {\mathcal K}$ of $\hat X$ coincides on $X$  with the multiple 
$\hat {\mathcal K} = K\cdot \mathcal H^2$ of the canonical area measure $\mathcal H^2$.  By the Gauss-Bonnet theorem, $\mathcal {\hat K}(\hat X)= 4\pi$,. Thus, by assumption, $\hat {\mathcal K} (\{ p\})=0$.   Therefore, the equality $\hat {\mathcal K} = K\cdot \mathcal H^2$ is valid on all of $\hat X$.

Therefore, on all of $\hat X$, the metric is defined by a conformal change $e^{2\hat u} \cdot \delta _{\mathbb S^2}$ of the round metric on $\mathbb S^2$, such that the  spherical Laplacian of $\hat u$ is  a bounded function $K  + 1$.  Elliptic regularity implies that $\hat u$ is of class $\mathcal C^{1,\alpha}$ for any $\alpha <1$. Moreover,
if the curvature  $K:X=\R^2 \to \R$ extends as a $\beta$-Hoelder continuous function to $\mathbb S^2$ then $\hat u$ is $\mathcal C^{2,\beta}$-Hoelder.

  An example of a conformal metric $e^{2 v} \cdot \delta _{\R^2}$ on a disc , which is  smooth outside the origin $p$, not 
$\mathcal C^{1,1}$ at the origin and,  such that the Lapalcian  $ \Delta v $ is continuous, is presented in 
\cite[p. 693]{Sabitov}.  This metric (restricted to a subdisc) can clearly be extended to a metric on the sphere, which  has continuous curvature but is not $\mathcal C^{1,1}$ in conformal coordinates.  This example finishes the proof.
\end{proof}

\bibliographystyle{alpha}
\bibliography{planes}

\end{document}